\def\R{{\mathbb R}}
\def\C{{\mathbb C}}
\newcommand{\E}{{\mathbb E}}
\newcommand{\F}{{\mathscr F}}
\renewcommand{\H}{{H_\infty}}
\newcommand{\HH}{\mathscr{H}}
\newcommand{\e}{\varepsilon}
\newcommand{\Om}{\Omega}
\newcommand{\Ito}{{\hbox{\rm It\^o}}}
\renewcommand{\L}{L^2(0,T)}
\newcommand{\beq}{\begin{equation}}
\newcommand{\eeq}{\end{equation}}
\newcommand{\bal}{\begin{aligned}}
\newcommand{\eal}{\end{aligned}}
\newcommand{\ben}{\begin{enumerate}}
\newcommand{\beni} {\begin{enumerate}[(i)]}
\newcommand{\een}{\end{enumerate}}
\newcommand{\bit}{\begin{itemize}}
\newcommand{\eit}{\end{itemize}}
\newcommand{\beqw}{\begin{equation*}}
\newcommand{\eeqw}{\end{equation*}}
\newcommand{\bthm}{\begin{thm}}
\newcommand{\ethm}{\end{thm}}
\newcommand{\bpr}{\begin{prop}}
\newcommand{\epr}{\end{prop}}
\newcommand{\ble}{\begin{lem}}
\newcommand{\ele}{\end{lem}}
\newcommand{\blem}{\begin{lem}}
\newcommand{\elem}{\end{lem}}
\newcommand{\bpf}{\begin{proof}}
\newcommand{\epf}{\end{proof}}
\newcommand{\bex}{\begin{ex}}
\newcommand{\eex}{\end{ex}}
\newcommand{\bre}{\begin{ex}}
\newcommand{\ere}{\end{ex}}
\renewcommand{\le}{\leqslant}
\renewcommand{\leq}{\leqslant}
\renewcommand{\ge}{\geqslant}
\renewcommand{\ge}{\geqslant}
\newcommand{\bma}{\begin{bmatrix}}
\newcommand{\ema}{\end{bmatrix}}
\renewcommand{\Re}{\hbox{\rm Re}}
\newcommand{\Dom}{{\mathsf D}}
\newcommand{\D}{{\mathsf D}_p}
\newcommand{\wt}{\widetilde}
\newcommand{\calL}{{\mathscr L}}
\newcommand{\n}{\Vert}
\newcommand{\g}{\gamma}
\newcommand{\embed}{\hookrightarrow}
\newcommand{\s}{^*}
\newcommand{\lb}{\langle}
\newcommand{\rb}{\rangle}
\renewcommand{\SS}{{\bf S}}
\newcommand{\ip}[1]{\langle {#1}\rangle}
\newcommand{\Ran}{\mathsf{R}}
\renewcommand{\Re}{{\rm{Re}}\;}
\renewcommand{\i}{_\infty}
\newcommand{\ot}{\otimes}
\newcommand{\uL}{{L}}
\newcommand{\uD}{D}
\renewcommand{\L}{\mathscr{L}}
 \newtheorem{thm}{Theorem}[section]
 \newtheorem{cor}[thm]{Corollary}
 \newtheorem{lem}[thm]{Lemma}
 \newtheorem{prop}[thm]{Proposition}
 \theoremstyle{definition}
 \theoremstyle{remark}
 \newtheorem{rem}[thm]{Remark}
 \newtheorem{ex}{Example} 
 \numberwithin{equation}{section}
\begin{document}

\title[Ornstein-Uhlenbeck operators]
{Gradient estimates and domain identification for analytic Ornstein-Uhlenbeck operators}

\author{Jan Maas}
\address{
Institute for Applied Mathematics\\
University of Bonn\\
Endenicher Allee 60\\
53115 Bonn\\
Germany} 
\email{maas@iam.uni-bonn.de}

\author{Jan van Neerven}
\address{Delft Institute of Applied Mathematics\\
Delft University of Technology\\ P.O. Box 5031\\ 2600 GA Delft\\The
Netherlands}
\email{J.M.A.M.vanNeerven@TUDelft.nl}

\thanks{The authors are supported by VIDI subsidy 639.032.201
(JM) and VICI subsidy 639.033.604 (JvN)
of the Netherlands Organisation for Scientific Research (NWO)}

\dedicatory{Dedicated to Herbert Amann on the occasion of his 70th birthday}

\keywords{Ornstein-Uhlenbeck operators, gradient estimates, domain identification}

\subjclass{47D07 (35R15, 42B25, 60H15)}

\begin{abstract}
Let $P$ be the Ornstein-Uhlenbeck semigroup associated
with the stochastic Cauchy problem
$$ dU(t) = AU(t)\,dt + dW_H(t), $$
where $A$ is the generator of a $C_0$-semigroup $S$ on a Banach space $E$,
$H$ is a Hilbert subspace of $E$,
and $W_H$ is an $H$-cylindrical Brownian motion. Assuming that
$S$ restricts to a $C_0$-semigroup on $H$, we obtain $L^p$-bounds for
$D_H P(t)$.
We show that if $P$ is analytic, then the invariance assumption
is fulfilled. As an application we determine the
$L^p$-domain of the generator of $P$ explicitly in the case where
$S$ restricts to a $C_0$-semigroup on $H$ which is similar to an analytic
contraction semigroup. The results are applied to the 1D stochastic heat equation
driven by additive space-time white noise.
\end{abstract}

\maketitle

\section{Introduction}

Consider the stochastic  Cauchy problem
\beq\tag{SCP}\label{SCP}
\bal dU(t) & = AU(t)\,dt + dW_H(t), \quad t\ge 0, \\
 U(0) & = x.
 \eal
\eeq Here $A$ generates a $C_0$-semigroup $S = (S(t))_{t\ge 0}$ on a real Banach space $E$,
$H$ is a real Hilbert subspace continuously embedded in $E$, $W_H$
is an $H$-cylindrical Brownian motion on a probability space $(\Om,\F\,P)$,
and $x\in E$. A {\em weak solution} is a measurable adapted
$E$-valued process $U^x = (U^x(t))_{t\ge 0}$ such that
$t\mapsto U^x(t)$ is integrable almost surely and for all $t\ge 0$ and $x\s\in \Dom(A\s)$ one has
$$ \ip{U^x(t),x^*} = \ip{x,x^*} + \int_0^t \lb U^x(s),A\s x\s\rb\,ds + W_H(t)i\s x\s \ \ \hbox{almost surely.}$$
Here $i:H\embed E$ is the inclusion mapping.
A necessary and sufficient condition for the existence of a weak solution is that
the operators $I_t:L^2(0,t;H)\to E$,  $$ I_t g := \int_0^t S(s)i g(s)\,ds,$$
are $\g$-radonifying for all $t\ge 0$. If this is the case, then $s\mapsto S(t-s)i$ is stochastically integrable on $(0,t)$ with respect to $W_H$ and the process $U^x$ is given by
$$ U^x(t) = S(t)x +  \int_0^t S(t-s)i\,dW_H(s), \quad t\ge 0.$$
For more information and an explanation of the terminology we refer to \cite{vNW05}.

Assuming the existence of the solution $U^x$, on the Banach space $C_{\rm b}(E)$ of all bounded continuous functions $f:E\to \R$ one defines the {\em Ornstein-Uhlenbeck semigroup}
$P = (P(t))_{t\ge 0}$ by
\begin{equation}\label{eq:P}
 P(t)f(x):= \E f(U^x(t)), \quad t\ge 0, \ x\in E.
\end{equation}
The operators $P(t)$ are linear contractions on $C_{\rm b}(E)$ and satisfy $P(0)
= I$ and  $P(s)P(t) = P(s+t)$ for all $s,t\ge 0$. For all $f\in C_{\rm b}(E)$ the mapping
$(t,x)\mapsto P(t)f(x)$ is continuous, uniformly on compact subsets of $[0,\infty)\times E$.

If the operator $I_\infty:L^2(0,\infty;H)\to E$ defined by  $$
I_\infty g := \int_0^\infty S(t)i g(t)\,dt$$ is $\g$-radonifying,
then the problem \eqref{SCP} admits a unique invariant measure
$\mu_\infty$. This measure is a centred Gaussian Radon measure on
$E$, and its covariance operator equals $I_\infty I_\infty\s$.
Throughout this paper we shall assume that this measure exists; if
\eqref{SCP} has a solution, then this assumption is for instance
fulfilled if $S$ is uniformly exponentially stable. The reproducing
kernel Hilbert space associated with $\mu_\infty$ is denoted by
$H_\infty$. The inclusion mapping $H_\infty\embed E$ is denoted by
$i_\infty$. Recall that $Q_\infty:= i_\infty i_\infty\s = I_\infty
I_\infty\s$. Is is well-known that $S$ restricts to a
$C_0$-contraction semigroup on $H_\infty$ \cite{CG96} (the proof for
Hilbert spaces $E$ extends without change to Banach spaces $E$),
which we shall denote by $S_\infty$.

By a standard application of Jensen's inequality,
the semigroup $P$ has a unique extension to a $C_0$-contraction semigroup to the
spaces $L^p(E,\mu\i)$, $1\le p<\infty$. By slight abuse of notation we shall denote this semigroup by $P$ again. Its infinitesimal generator will be denoted by $L$. In order to give an explicit expression for $L$ it is useful to introduce, for integers $k,l\ge 0$, the space $\F C_{\rm b}^{k,l}(E)$ consisting of all functions $f\in  C_{\rm b}(E)$ of the form
$$ f(x) = \varphi(\lb x,x_1\s\rb, \dots, \lb x,x_N\s\rb)$$
with $f\in C_{\rm b}^k(\R^N)$ and  $x_1^*,\dots,x_N^*\in
\Dom(A^{*l})$. With this notation one has that $\F C_{\rm
b}^{2,1}(E)$ is a core for $L$, and on this core one has
$$ Lf(x) = \frac12 \,{\rm tr}\, D_H^2 f(x)+ \lb x,A\s Df(x)\rb.$$
Here,
$$\bal D_H f(x) & = \sum_{n=1}^N \frac{\partial \varphi}{\partial x_n}(\lb x,x_1\s\rb, \dots, \lb x,x_N\s\rb)\otimes i\s x_n\s,\\
 D f (x)& = \sum_{n=1}^N \frac{\partial \varphi}{\partial x_n}(\lb x,x_1\s\rb, \dots, \lb x,x_N\s\rb)\otimes x_n\s,
\eal$$
denote the Fr\'echet derivatives into the directions of $H$ and $E$, respectively.

\section{Gradient estimates: the $H$-invariant case}\label{sec:gradient}

Our first result gives a pointwise gradient bound for $P$ under the assumption that
$S$ restricts to a $C_0$-semigroup on $H$ which will be denoted by $S_H$. 
As has been shown in \cite[Corollary 5.6]{GGvN03},
under this assumption the operator $D_H$ is closable as a densely defined
operator from $L^p(E,\mu\i)$ to $L^p(E,\mu;H)$
for all $1\le p<\infty$. The domain of its closure is denoted by $\D(D_H)$.

 \begin{prop}[Pointwise gradient bounds]
 \label{prop:pointwisegrad}
If $S$ restricts to a $C_0$-semigroup on $H$, then for all $1 <
p <\infty$ there exists a constant $C\ge 0$ such that for all $t>0$
and $f\in \F C_{\rm b}^{1,0}(E)$ we have
\begin{align*}
  \sqrt{t} |D_H P(t) f(x)| \le C \kappa(t) (P(t)|f|^p(x))^{1/p},
\end{align*}
where $ \kappa(t) := \sup_{s \in [0,t]} \|S_H(s)\|_{\L(H)}.$
\end{prop}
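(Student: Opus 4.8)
The plan is to prove the estimate by a Bismut--Elworthy--Li type integration by parts, trading the derivative in the initial datum for a stochastic-integral weight against the driving noise, and gaining the factor $t^{-1/2}$ by spreading this perturbation uniformly over $[0,t]$.

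First I would differentiate the initial condition. Since $f\in \F C_{\rm b}^{1,0}(E)$ is cylindrical and $C^1_{\rm b}$, differentiation under the expectation in \eqref{eq:P} is justified, and using $U^x(t)=S(t)x+\int_0^t S(t-s)i\,dW_H(s)$ together with the intertwining relation $iS_H(s)=S(s)i$ (which is exactly the meaning of the hypothesis that $S$ restricts to $S_H$ on $H$) gives, for $h\in H$,
\[
\langle D_H P(t)f(x),h\rangle_H = \E\,\langle Df(U^x(t)),iS_H(t)h\rangle .
\]
The key observation is that the initial shift $iS_H(t)h=S(t)ih$ can be reproduced by a \emph{deterministic} perturbation of the noise. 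Setting $g(s):=\tfrac1t S_H(s)h$, the identity $S(t-s)iS_H(s)h=S(t)ih$ shows that $S(t-s)ig(s)$ is constant in $s$, whence $\int_0^t S(t-s)ig(s)\,ds = S(t)ih$; equivalently, the Malliavin derivative of $U^x(t)$ in the direction $g$ equals the initial-datum perturbation. A Gaussian integration by parts then moves the derivative onto the Wiener integral of $g$:
\[
\langle D_H P(t)f(x),h\rangle_H = \E\Big[\,f(U^x(t))\cdot\tfrac1t\textstyle\int_0^t \langle S_H(s)h,dW_H(s)\rangle\,\Big].
\]
For cylindrical $f$ this reduces to a finite-dimensional Gaussian computation, so no heavy machinery is needed; the decisive use of the hypothesis is precisely the intertwining relation that makes the averaged perturbation admissible.

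It then remains to estimate. I would apply H\"older's inequality with exponents $p$ and $p'=p/(p-1)$: the first factor is exactly $(P(t)|f|^p(x))^{1/p}$, and the second is the $p'$-th moment of a centred Gaussian. Since $\int_0^t \langle S_H(s)h,dW_H(s)\rangle$ has variance $\int_0^t |S_H(s)h|_H^2\,ds\le t\,\kappa(t)^2|h|_H^2$, its $L^{p'}$-norm is at most $c_{p'}\sqrt{t}\,\kappa(t)\,|h|_H$, where $c_{p'}$ is the $p'$-th absolute moment of a standard normal. Dividing by $t$ and taking the supremum over $|h|_H\le1$ yields $|D_H P(t)f(x)|\le c_{p'}\,t^{-1/2}\kappa(t)(P(t)|f|^p(x))^{1/p}$, i.e.\ the claim with $C=c_{p'}$. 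Here the restriction $p>1$ is used precisely to keep $p'<\infty$, so that the Gaussian weight has a finite $L^{p'}$-moment.

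The main obstacle I expect is the rigorous justification of the integration by parts in the Banach-space setting---differentiating under the expectation and identifying the stochastic-integral weight---but restricting to the cylindrical core $\F C_{\rm b}^{1,0}(E)$ reduces everything to finite-dimensional Gaussian analysis and sidesteps this difficulty.
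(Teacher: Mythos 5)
Your proof is correct, and its skeleton coincides with the paper's: the decisive step in both arguments is the observation that the averaged perturbation $g(s)=\tfrac1t S_H(s)h$ satisfies $\int_0^t S(t-s)ig(s)\,ds = S(t)ih$ (this is where the invariance hypothesis enters and where the factor $t^{-1/2}$ is gained), and both conclude with H\"older's inequality against a centred Gaussian weight of variance at most $t^{-1}\kappa(t)^2\|h\|_H^2$; your Gaussian moment constant $c_{p'}$ plays exactly the role of the Kahane--Khintchine constant $K_q$ in the paper. Where you genuinely differ is in how that Gaussian weight is produced. The paper works on the state space: it applies the Cameron--Martin formula to the law $\mu_t$ of $U^0(t)$ and differentiates the exponential density, obtaining the weight $\phi^{\mu_t}_{S(t)h}$ attached to the reproducing kernel Hilbert space $H_t$. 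You work on the underlying probability space: you differentiate under the expectation and then integrate by parts against the driving noise (Bismut--Elworthy--Li with a deterministic direction), obtaining the Wiener-integral weight $\tfrac1t\int_0^t\langle S_H(s)h,dW_H(s)\rangle$. These two weights agree only up to a mean-zero Gaussian summand independent of $U^x(t)$ (the component of $g$ lying in the kernel of the map $g\mapsto\int_0^t S(t-s)ig(s)\,ds$, which need not vanish), so strictly speaking your weight is not the minimal-variance one; but since that extra summand is independent of $U^x(t)$ and centred, both your identity and your estimate survive, and your variance bound is the same as the paper's estimate \eqref{est:Ht}. What your route buys is that it avoids introducing $H_t$ and the isometry $\phi^{\mu_t}$ altogether, and, as you note, for cylindrical $f$ the integration by parts reduces to a finite-dimensional Gaussian (Stein-type) computation with covariance $\E[\langle U^0(t),x_j^*\rangle\,W(g)]=\langle x_j^*,S(t)ih\rangle$, so the argument is self-contained and elementary. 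What the paper's route buys is that it is intrinsic to the law $\mu_t$ --- it never refers to the noise or the probability space --- and produces the canonical (minimal) weight; this is the formulation that generalises most directly to settings where only the transition measures, not a driving noise, are given.
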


 \begin{proof}
The proof follows the lines of \cite[Theorem 8.10]{MN-kato} and is
inspired by the proof of \cite[Theorem 6.2.2]{DZ},
where the null controllable case was considered.

The distribution $\mu_t$ of the random variable $U^0(t)$ is a centred
Gaussian Radon measure on $E$. Let $H_t$ denote its RKHS and let
$i_t:H_t\embed E$ be the inclusion mapping. As is well  known and
easy to prove, cf. \cite[Appendix B]{DaPZab} one has
$$ H_t  = \Big\{\int_0^t S(t-s)ig(s)\,ds: \ g\in L^2(0,t;H)\Big\}$$
with
$$ \n h\n_{H_t} = \inf\Big\{ \n g\n_{L^2(0,t;H)}: \ h = \int_0^t S(t-s)ig(s)\,ds\Big\}.$$
The mapping $$\phi^{\mu_t}: i_t\s x\s\mapsto \lb \cdot,x\s \rb, \quad
x\s\in E\s,$$ defines an isometry from $H_t$ onto a closed subspace
of $L^2(E,\mu_t)$. For $h\in H_t$ we shall write
$\phi^{\mu_t}_{h}(x):= (\phi^{\mu_t} h)(x)$.

Fix $h\in H$. Since $S$ restricts to a $C_0$-semigroup $S_H$ on $H$ we may
consider the function $g\in L^2(0,t;H)$ given by $g(s) = \frac1t
S(s)h$. From the identity $S(t)h = \int_0^t S(t-s)g(s)\,ds$ we deduce
that $S(t)h\in H_t$ and \beq\label{est:Ht} \n S(t)h\n_{H_t}^2 \leq \n
g\n_{L^2(0,t;H)}^2 = \frac1{t^2} \int_0^t \n S(s)h\n_H^2\,ds \le
\frac1t \kappa(t)^2 \n h\n_H^2. \eeq

Fix a function $f\in \F C_{\rm b}^{1,0}(E)$, that is, $f(x) =
\varphi(\lb x,x_1\s\rb,\dots,\lb x,x_N\s\rb)$ with $\varphi\in C_{\rm
b}^{1}(\R^N)$ and $x_1\s,\dots,x_N\s\in E\s$. It is easily checked
that for all $t>0$ we have $P(t)f\in \F C_{\rm b}^{1,0}(E)$; in
particular this implies that $P(t)f\in \D(D_H)$. By the
Cameron-Martin formula \cite{Bo},
$$
\bal
 \frac1{\e}\big(P(t)f(x+\e h)-\!P(t)f(x)\big)
 & = \frac1\e \int_E \big(f(S(t)(x + \e h) +y) - f(S(t)x+y)\big)\,d\mu_t(y)
\\ & =  \int_E \frac1\e(E_{\e S(t)h} -1) f(S(t)x+y)\,d\mu_t(y),
\eal
$$
where for $h\in H_t$ we write $$E_{h}(x):= \exp(\phi^{\mu_t}_{h}(x) -
\tfrac12\n h\n_{H_t}^2).$$ It is easy to see that for each $h\in H_t$
the family $\big(\frac1\e(E_{\e h}-1)\big)_{0<\e<1}$ is uniformly
bounded in $L^2(E,\mu_t)$, and therefore uniformly integrable in
$L^1(E,\mu_t)$. Passage to the limit $\e\downarrow 0$ in the previous
identity now gives
$$
[D_H P(t)f(x), h]
 = \int_E  f(S(t)x+y)\phi^{\mu_t}_{S(t)h}(y)\,d\mu_t(y).
$$
By H\"older's inequality with $\frac1r+\frac1q=1$ and the
Kahane-Khintchine inequality, which can be applied since $\phi^{\mu_t}_{S(t)h}$ is a Gaussian random variable,
$$
\bal \ & |[D_H P(t)f(x), h]|
\\ & \qquad \le \Big(\int_E |f(S(t)x+y)|^r\,d\mu_t(y)\Big)^\frac1r
\Big(\int_E |\phi^{\mu_t}_{S(t)h}(y)|^q\,d\mu_t(y)\Big)^\frac1q
\\ & \qquad \le K_q
  \Big(\int_E |f(S(t)x+y)|^r\,d\mu_t(y)\Big)^\frac1r\Big(\int_E
|\phi^{\mu_t}_{S(t)h}(y)|^2\,d\mu_t(y)\Big)^\frac12
\\ & \qquad = K_q  (P(t)|f|^r(x))^\frac{1}{r} \n S(t)h\n_{H_t}.
\eal
$$
Using \eqref{est:Ht}  we find that
$$
\bal \ & \big|\sqrt{t}[D_H P(t)f(x), h]\big|
 \leq K_q \kappa(t)(P(t)|f|^r(x))^{\frac{1}{r}}
 \n h\n_{H}, \eal
$$
and by taking the supremum over all $h\in H$ of norm 1 we obtain the
desired estimate.
 \end{proof}

\begin{cor}
If $S$ restricts to a $C_0$-semigroup on $H$, then
for all $1<p<\infty$ the operators $D_HP(t)$, $t>0$, extend uniquely
to bounded operators from $L^p(E,\mu\i)$ to $L^p(E,\mu\i;H)$,
and there exists a constant
$C\ge 0$ such that for any $t > 0,$
$$ \sqrt{t}\n D_H P(t)\n_{\calL(L^p(E,\mu\i), L^p(E,\mu\i;H))} \le
 C \kappa(t).$$
\end{cor}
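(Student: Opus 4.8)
The plan is to integrate the pointwise estimate of Proposition~\ref{prop:pointwisegrad} against the invariant measure $\mu_\infty$ and then to pass from the core $\F C_{\rm b}^{1,0}(E)$ to all of $L^p(E,\mu_\infty)$ by a density argument; the entire analytic content is already contained in the proposition, so what remains is essentially bookkeeping.

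First I would fix $1<p<\infty$ and $t>0$ and take $f\in \F C_{\rm b}^{1,0}(E)$. Raising the pointwise bound of Proposition~\ref{prop:pointwisegrad} to the power $p$ and integrating over $E$ against $\mu_\infty$ gives
\[
t^{p/2}\int_E |D_H P(t)f(x)|^p\,d\mu_\infty(x)\le C^p\kappa(t)^p\int_E P(t)|f|^p(x)\,d\mu_\infty(x).
\]
The decisive step is to invoke the invariance of $\mu_\infty$: since $|f|^p\in C_{\rm b}(E)$, the defining property $\int_E P(t)g\,d\mu_\infty=\int_E g\,d\mu_\infty$ of the invariant measure applies with $g=|f|^p$, so the right-hand integral equals $\|f\|_{L^p(E,\mu_\infty)}^p$. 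Taking $p$-th roots then yields
\[
\sqrt{t}\,\|D_H P(t)f\|_{L^p(E,\mu_\infty;H)}\le C\kappa(t)\,\|f\|_{L^p(E,\mu_\infty)},
\]
which is precisely the asserted inequality, valid for every $f$ in the subspace $\F C_{\rm b}^{1,0}(E)$.

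To conclude I would use that $\F C_{\rm b}^{1,0}(E)$ is dense in $L^p(E,\mu_\infty)$. Since $D_H P(t)$ is a linear map from this dense subspace into $L^p(E,\mu_\infty;H)$ that is bounded for the $L^p$-norm with constant $C\kappa(t)/\sqrt{t}$, the bounded linear extension theorem furnishes a unique bounded extension to all of $L^p(E,\mu_\infty)$ obeying the same bound. Because $D_H$ is closable (as recorded before Proposition~\ref{prop:pointwisegrad}), approximating an arbitrary $f\in L^p(E,\mu_\infty)$ by $f_n\in\F C_{\rm b}^{1,0}(E)$ and noting that $P(t)f_n\to P(t)f$ in $L^p$ while $D_H P(t)f_n$ converges, the closedness of $\overline{D_H}$ identifies the extension with $\overline{D_H}P(t)$; in particular $P(t)$ maps $L^p(E,\mu_\infty)$ into $\domain{\overline{D_H}}$, so the notation $D_H P(t)$ is unambiguous.

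I do not expect any genuine difficulty here. The only points meriting care are that $P(t)|f|^p$ is $\mu_\infty$-integrable and subject to the invariance identity (immediate, as $|f|^p$ is bounded and continuous) and that the density-based extension is independent of the chosen approximating sequence, which is exactly where the closability of $D_H$ enters.
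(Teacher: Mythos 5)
Your proposal is correct and follows exactly the paper's argument: integrate the $p$-th power of the pointwise bound from Proposition~\ref{prop:pointwisegrad}, use the invariance of $\mu_\infty$ to replace $\int_E P(t)|f|^p\,d\mu_\infty$ by $\|f\|_{L^p(E,\mu_\infty)}^p$, and extend by density. The paper leaves the density extension and the identification via closability of $D_H$ implicit; your spelling these out is a harmless (and sound) elaboration, not a different route.
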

\bpf Integrating the inequality of the proposition and using the fact
that $\mu_\infty$ is an invariant measure for $P$ we obtain
$$
\bal
 \n \sqrt{t} D_H P(t)f\n_{L^p(E,\mu\i)}^p
 & \le C^p \kappa(t)^p\int_E P(t)|f|^p(x)\,d\mu\i(x)
 \\ & = C^p\kappa(t)^p \int_E |f|^p(x)\,d\mu\i(x)
= C^p \kappa(t)^p\n f\n_{L^p(E,\mu\i)}^p.
 \eal
 $$
\epf

\section{Gradient estimates: the analytic case}\label{sec:gradient-anal}

Analyticity of the semigroup
$P$ on $L^p(E,\mu_\infty)$ has been investigated by several authors \cite{Fu95, Go99, GvN03, MN1}.
The following result of \cite{GvN03} is our starting point. Recall that in the definition of an {\em analytic $C_0$-contraction semigroup}, contractivity is required on an open sector containing the positive real axis. 

\begin{prop} \label{prop:analytic} For any $1 <p<\infty$ 
the following assertions are equivalent:
\ben
\item[\rm(1)]  $P$ is an analytic $C_0$-semigroup on
$L^p(E,\mu_\infty)$;
\item[\rm(2)] $P$ is an analytic $C_0$-contraction semigroup on $L^p(E,\mu_\infty)$;
\item[\rm(3)] $S$ restricts to an analytic $C_0$-contraction semigroup on
$H_\infty$;
\item[\rm(4)] $Q_\infty A^*$ acts as a bounded operator in
${H}$. \een
\end{prop}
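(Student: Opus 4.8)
The plan is to prove Proposition~\ref{prop:analytic} by establishing a cycle of implications. The equivalence $(1)\Leftrightarrow(2)$ is the easiest: since $P$ is always a $C_0$-contraction semigroup on $L^p(E,\mu_\infty)$, analyticity of $P$ automatically comes with contractivity on a sector by a standard interpolation/sectoriality argument (once the semigroup is analytic and contractive on the real axis, one recovers sector contractivity from the numerical-range or Stein-interpolation machinery), so I would dispatch this direction by citing the relevant general semigroup result. The genuinely structural equivalences are $(2)\Leftrightarrow(3)$ and $(3)\Leftrightarrow(4)$, and these are where the work lies.

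For $(2)\Leftrightarrow(3)$, the key idea is that the $L^p$-behaviour of $P$ is governed by the action of $S$ on the reproducing kernel Hilbert space $H_\infty$. I would use the second-quantization/chaos structure: the generator $L$ decomposes along the Wiener chaos, and its action on the first chaos is essentially the restriction $S_\infty$ of $S$ to $H_\infty$ (recall $S$ restricts to a $C_0$-contraction semigroup $S_\infty$ on $H_\infty$). Analyticity of $P$ on $L^p$ is equivalent, via the sectoriality characterization of $L$, to sectoriality of the generator on each chaos level, and the governing object is the generator of $S_\infty$. Thus I would reduce analyticity of $P$ to analyticity of $S_\infty$ on $H_\infty$, exploiting that the first chaos carries the essential spectral/angle information and higher chaoses only improve the sector. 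This reduction is precisely the content imported from \cite{GvN03}, so in practice the cleanest route is to invoke that reference for $(2)\Leftrightarrow(3)$ and concentrate the new argument on relating condition~(4) to the others.

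For $(3)\Leftrightarrow(4)$, I would work directly with the generators. Let $A_\infty$ denote the generator of $S_\infty$ on $H_\infty$. The analyticity of $S_\infty$ is equivalent to $A_\infty$ being sectorial of angle less than $\pi/2$, which for a contraction semigroup is equivalent to a bound of the form $\|A_\infty x\|\lesssim$ (suitable resolvent/quadratic-form control). The bridge to condition~(4) is the identity $Q_\infty = i_\infty i_\infty^*$ together with the intertwining relation between $S$, its adjoint, and $Q_\infty$. The plan is to use the well-known Lyapunov-type relation connecting the invariant covariance $Q_\infty$ with $A$ and $A^*$: differentiating $S(t)Q_\infty S^*(t)$ yields, at least formally, $Q_\infty A^* + A Q_\infty = -ii^*$ (the stationarity/fluctuation-dissipation identity), so that $Q_\infty A^*$ controls the symmetric part and hence encodes exactly the self-adjoint-plus-bounded structure of $A_\infty$. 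Boundedness of $Q_\infty A^*$ as an operator on $H$ is then what forces $A_\infty$ to be a bounded perturbation of a self-adjoint (dissipative) operator on $H_\infty$, which is equivalent to analyticity of $S_\infty$.

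The hard part will be making the formal Lyapunov identity rigorous at the level of operator domains: $Q_\infty A^*$ need not be densely defined or closed a priori, and one must interpret ``acts as a bounded operator in $H$'' correctly, checking that $Q_\infty$ maps $\domain{A^*}$ into $H$ and that the resulting operator extends boundedly. I would handle this by testing against the dense set of functionals $x^*\in\domain{A^{*}}$, using the isometric correspondence $\phi^{\mu_\infty}$ between $H_\infty$ and the first Wiener chaos to transfer the computation into $L^2(E,\mu_\infty)$, where the generator $L$ acts explicitly via $\tfrac12\operatorname{tr}D_H^2 + \langle x, A^* D\,\cdot\rangle$. The term $\langle x, A^* Df\rangle$ is where $A^*$ enters, and its boundedness relative to the Dirichlet form built from $D_H$ is exactly the quantitative form of condition~(4); identifying the sector angle from this relative-boundedness constant is the remaining technical obstacle. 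Since Proposition~\ref{prop:analytic} is attributed to \cite{GvN03}, I anticipate that in the paper itself this is stated as a cited result rather than reproved, and I would follow that convention while recording the generator-level dictionary above as the conceptual justification.
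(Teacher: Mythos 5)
Your closing remark is the only part that matches the paper: Proposition~\ref{prop:analytic} is not proved in the paper at all, but quoted verbatim from \cite{GvN03} (``The following result of \cite{GvN03} is our starting point''), so citing that reference is exactly what the authors do. The problem lies in the sketch you propose to record as conceptual justification, which contains a genuine error precisely at the step you call ``the easiest''. There is no general semigroup-theoretic result that upgrades analyticity plus contractivity on the positive real axis to contractivity on an open sector: dissipativity of the generator only places the numerical range in a half-plane, whereas sector contractivity requires the \emph{numerical range} (not merely the spectrum, which is all that analyticity controls for non-normal generators) to lie in a strict subsector; and Stein interpolation needs contractive bounds on two endpoint spaces, it does not convert real-axis contractivity into sector contractivity on a fixed $L^p$. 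The sentence the authors place immediately before the proposition --- recalling that an analytic $C_0$-contraction semigroup must be contractive on an \emph{open sector} --- is there exactly because $(1)\Rightarrow(2)$ is a nontrivial, Ornstein--Uhlenbeck-specific rigidity and not a soft general fact. The actual mechanism in \cite{GvN03} is second quantisation: on $L^2(E,\mu_\infty)$ one has $P(t)=\Gamma(S_\infty^*(t))$ (cf.\ \cite{CG96}), and for a bounded operator $T$ on $H_\infty$ the operator $\Gamma(T)$ is bounded if and only if $\|T\|\le 1$, in which case it is automatically a contraction, because $\Gamma(T)$ acts as $T^{\otimes n}$ on the $n$-th chaos and $\|T^{\otimes n}\|=\|T\|^n$. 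Hence any analytic extension of $P$ forces $\|S_\infty(z)\|\le 1$ on the sector, so that $(1)\Rightarrow(2)$ is obtained \emph{through} $(3)$, not prior to it and not by a citable general result.

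A second inaccuracy sits in your bridge $(3)\Leftrightarrow(4)$: analyticity of the contraction semigroup $S_\infty$ is \emph{not} equivalent to its generator $A_\infty$ being a bounded perturbation of a self-adjoint dissipative operator, and in this setting $A_\infty$ has no such structure; writing $Q_\infty A^* = iBi^*$ with $B+B^*=-I$, the sesquilinear form of $A_\infty^*$ on the dense set $i_\infty^* \Dom(A^*)$ is $[Bi^*x^*,i^*y^*]_H$, whose symmetric part is the \emph{unbounded} form $-\tfrac12[Vh,Vh']_H$ (with $V$ the closed operator $i_\infty^*x^*\mapsto i^*x^*$ of \eqref{eq:defV}) and whose antisymmetric part is controlled by the same quantity $\|Vh\|_H^2$. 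What boundedness of $Q_\infty A^*$ buys, via the Lyapunov identity $Q_\infty A^*+AQ_\infty=-ii^*$ that you correctly invoke, is exactly this relative control, i.e.\ a numerical range contained in a sector, equivalently dissipativity of $e^{\pm i\theta}A_\infty$ for some $\theta>0$; that, and not a self-adjoint-plus-bounded decomposition, is the condition equivalent to analytic contractivity of $S_\infty$. Your instincts to transfer computations to the first Wiener chaos and to worry about the domain of $Q_\infty A^*$ are sound, but as written the two steps carrying the real content of the proposition both rest on characterisations that are false in general.
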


A more precise formulation of \rm(4) is that there should exist a
bounded operator $B:H\to H$ such that
$ i B i\s x\s = Q_\infty  A\s x\s$ for all $x\s\in E\s.$
The identity $Q_\infty A\s + AQ_\infty = -ii\s $ implies that
$B+B^* = -I$.

In what follows we shall simply say that `$P$ is analytic' to express that the equivalent
conditions of the proposition are satisfied for some (and hence for all) $1< p<\infty$.

The next result has been shown in \cite{MN1} for $p=2$ and was extended to
$1<p<\infty$ in \cite{MN-kato}.

\begin{prop}\label{prop:MN} If $P$ is analytic, then
$\F C_{\rm b}^{2,1}(E)$
is a core for the generator $L$ of $P$ in $L^p(E,\mu_\infty)$, and on this core $L$ is given
by
$$L = D_H\s B D_H.$$
\end{prop}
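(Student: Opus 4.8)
The plan is to exploit the two facts already recorded in the introduction: that $\F C_{\rm b}^{2,1}(E)$ is a core for $L$, and that on this core $Lf = \tfrac12\tr D_H^2 f + \lb x, A\s Df\rb$. Thus it suffices to prove the pointwise identity $D_H\s B D_H f = \tfrac12\tr D_H^2 f + \lb x, A\s Df\rb$ for every $f\in \F C_{\rm b}^{2,1}(E)$, after checking that both sides are well defined; the factorized form $L=D_H\s BD_H$ on the core then follows, with $L$ its closure. Here analyticity enters through Proposition~\ref{prop:analytic}(4), which supplies a \emph{bounded} operator $B:H\to H$ with $iBi\s x\s = Q_\infty A\s x\s$ and $B+B\s=-I$. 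Boundedness of $B$ is exactly what makes $BD_Hf$ a legitimate element of $L^p(E,\mu\i;H)$, so that the composition $D_H\s BD_H$ is meaningful in the first place.

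First I would compute the inner operators explicitly. Writing $f(x)=\varphi(\lb x,x_1\s\rb,\dots,\lb x,x_N\s\rb)$ with $\varphi\in C_{\rm b}^2(\R^N)$ and $x_1\s,\dots,x_N\s\in\Dom(A\s)$, one has $D_Hf=\sum_n \partial_n\varphi\, i\s x_n\s$ and hence $BD_Hf=\sum_n \partial_n\varphi\, Bi\s x_n\s$, a cylindrical $H$-valued field. Next I would apply the Gaussian divergence formula for $D_H\s$: for a field $\sum_n g_n h_n$ with $h_n\in H$ and $g_n$ cylindrical, $D_H\s\big(\sum_n g_n h_n\big)=\sum_n\big(g_n\,\phi^{\mu\i}_{k_n}-[D_H g_n, h_n]_H\big)$, where $\phi^{\mu\i}_{k_n}$ is the first-chaos function attached to the Cameron--Martin direction $ih_n=i_\infty k_n$. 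This is the step where the hypothesis is used in an essential way: taking $h_n=Bi\s x_n\s$ gives $ih_n=iBi\s x_n\s=Q_\infty A\s x_n\s = i_\infty i_\infty\s A\s x_n\s$, which lies in $\Ran(i_\infty)$ precisely because $x_n\s\in\Dom(A\s)$. Hence $ih_n$ really is a Cameron--Martin direction with $k_n=i_\infty\s A\s x_n\s$, and by the defining isometry of $\phi^{\mu\i}$ the associated first-chaos element is $\phi^{\mu\i}_{k_n}=\lb\cdot,A\s x_n\s\rb$.

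With this the identity assembles in two pieces. The first-chaos terms give $\sum_n \partial_n\varphi\,\lb x,A\s x_n\s\rb=\lb x,A\s Df\rb$, reproducing the drift term. The derivative terms give $-\sum_{n,m}\partial_n\partial_m\varphi\,[i\s x_m\s, Bi\s x_n\s]_H$; since $\partial_n\partial_m\varphi$ is symmetric in $(n,m)$ I would symmetrize and use $B+B\s=-I$ to replace $[i\s x_m\s, (B+B\s)i\s x_n\s]_H$ by $-[i\s x_m\s, i\s x_n\s]_H$, turning this sum into $\tfrac12\sum_{n,m}\partial_n\partial_m\varphi\,[i\s x_n\s, i\s x_m\s]_H=\tfrac12\tr D_H^2 f$. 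Adding the two pieces yields $Lf$, as claimed.

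The main obstacle I anticipate is not the algebra but the rigorous justification of the integration by parts: one must verify that $BD_Hf$ genuinely lies in the domain of the closed adjoint $D_H\s$ and that the Cameron--Martin derivative invoked in the divergence formula is legitimate. Both reduce to the inclusion $Q_\infty A\s x_n\s\in\Ran(i_\infty)$ together with boundedness of $B$, which is why the analyticity hypothesis is indispensable here; without it $B$ need not be bounded and the composition $D_H\s BD_H$ loses meaning. The core assertion itself I would inherit from the general theory quoted in the introduction, so that once the identity is established on $\F C_{\rm b}^{2,1}(E)$ the equality $L=D_H\s BD_H$ follows by closedness.
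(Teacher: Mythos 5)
Your proof is correct, but it is necessarily a different route from the paper's, for the simple reason that the paper does not prove Proposition \ref{prop:MN} at all: the result is quoted from \cite{MN1} (case $p=2$) and \cite{MN-kato} (case $1<p<\infty$), where both the core property and the factorisation are established within a second-quantisation framework. You instead take the introduction's background facts as given (that $\F C_{\rm b}^{2,1}(E)$ is a core for $L$ and that $Lf=\tfrac12\tr D_H^2f+\lb x,A\s Df\rb$ on it) and verify the identity $D_H\s BD_Hf=Lf$ by direct computation. The computation is sound: since $x_n\s\in\Dom(A\s)$, the vector $iBi\s x_n\s=Q_\infty A\s x_n\s=i_\infty(i_\infty\s A\s x_n\s)$ is a genuine Cameron--Martin direction, so Gaussian integration by parts applies; the resulting first-chaos factors $\lb\cdot,A\s x_n\s\rb$ reassemble into the drift term $\lb x,A\s Df\rb$; and symmetrising the Hessian contribution using $B+B\s=-I$ produces exactly $\tfrac12\tr D_H^2 f$. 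Moreover, testing against cylindrical $g$ (which by construction form a core for the closure of $D_H$ in $L^{p'}(E,\mu_\infty)$) rigorously places $BD_Hf$ in $\Dom(D_H\s)$ with the computed divergence, which settles the domain issue you flag as the main obstacle. What your route buys is transparency: within this paper, the operator identity is reduced to classical Cameron--Martin integration by parts plus the algebraic relation $B+B\s=-I$. What it does not deliver is the core assertion itself, which you inherit from the introduction; that statement is analyticity-free classical background (so the inheritance is not circular in substance), but it is also the part of the proposition for which the cited references do genuine work. One last point of care: both the statement and your argument presuppose closability of $D_H$, which under the analyticity hypothesis rests on Theorem \ref{thm:H-invar} combined with \cite[Corollary 5.6]{GGvN03}; since Theorem \ref{thm:H-invar} is proved without reference to Proposition \ref{prop:MN}, invoking it here creates no circularity.
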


Our first aim is to show that analyticity of $P$ implies that
$H$ is $S$-invariant. For self-adjoint $P$ this was proved in \cite{CG02,GvN03}.

\begin{thm}\label{thm:H-invar} If $P$ is analytic, then $S$ restricts to a bounded analytic
$C_0$-semigroup $S_H$ on $H$.
\end{thm}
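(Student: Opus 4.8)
The plan is to realize $S_H$ as the semigroup generated by the part $A_H$ of $A$ in $H$, and to derive its analyticity by transporting the sectorial resolvent bounds of $S_\infty$ from $H_\infty$ to $H$; the bounded operator $B$ furnished by Proposition~\ref{prop:analytic}(4) is the device that makes this transport possible.

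First I would record the algebraic data. By Proposition~\ref{prop:analytic}(4) there is a bounded $B$ on $H$ with $iBi^* = Q_\infty A^*$ on $\Dom(A^*)$ and $B+B^*=-I$. The Lyapunov identity $Q_\infty A^* + AQ_\infty = -ii^*$ then gives the companion identity $AQ_\infty = iB^* i^*$; in particular $Q_\infty(\Dom(A^*))\subseteq\Dom(A)$ and $AQ_\infty$ factors through $i$. I note for later use that $B+B^*=-I$ forces $B$ (and hence $B^*$) to be injective, so $\Ran(B^*)$ is dense in $H$.

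Next, using the intertwining $i_\infty^* S(t)^* = S_\infty(t)^* i_\infty^*$, hence $S(t)Q_\infty = i_\infty S_\infty(t)i_\infty^*$, together with $AQ_\infty = iB^* i^*$, I would compute on the dense subspace $Q_\infty(\Dom(A^*))$. For $h = i^{-1}Q_\infty x^*$ one finds $ih=Q_\infty x^* \in \Dom(A)$ and $Aih = iB^* i^* x^*$, so $i(H)$ is invariant under $A_H$ on this subspace and $A_H h = B^* i^* x^*$. At resolvent level this upgrades to
\begin{equation*}
R(\lambda,A)\,iB^*i^*x^* = i_\infty A_\infty R(\lambda,A_\infty)i_\infty^* x^*,
\end{equation*}
valid in a sector about the positive axis. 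Since $S_\infty$ is analytic (Proposition~\ref{prop:analytic}(3)), the operators $A_\infty R(\lambda,A_\infty)=\lambda R(\lambda,A_\infty)-I$ are uniformly bounded on $H_\infty$ in the sector, so the right-hand side is controlled in the norm of $H_\infty$, and hence---via the embedding $H_\infty\embed H$---in the norm of $H$. From here I would conclude sectoriality of $A_H$ on $H$: the map $\lambda\mapsto i^{-1}R(\lambda,A)i$ is holomorphic and satisfies $\n i^{-1}R(\lambda,A)i\n_{\mathscr{L}(H)}\le C/|\lambda|$ on a sector. By the standard generation theorem $A_H$ generates a bounded analytic $C_0$-semigroup; as $R(\lambda,A_H)=i^{-1}R(\lambda,A)i$, this semigroup is exactly the restriction $S_H$ of $S$, and $iS_H(t)=S(t)i$.

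Two points carry the real weight. The first is the continuous embedding $H_\infty\embed H$ (equivalently $Q_\infty\le C\,ii^*$), which is needed merely to make $i^{-1}Q_\infty x^*$ and the transported norms meaningful; I would establish it from the identities above. The second, and the main obstacle, is the quantitative passage from the dense subspace $B^* i^*(E^*)$ to all of $H$ with a uniform sector bound: the resolvent identity determines $R(\lambda,A)i$ only on a dense range, and a naive transfer of analyticity across the dense, non-invertible embedding $H_\infty\embed H$ fails in general, since that embedding is not bounded below. It is precisely here that the boundedness of $B$, together with $B+B^*=-I$, is indispensable---it is what converts the $H_\infty$-estimate for $i_\infty A_\infty R(\lambda,A_\infty)i_\infty^* x^*$ into an $H$-estimate against $\n B^* i^* x^*\n_H$, after which the density of $\Ran(B^*)$ finishes the argument.
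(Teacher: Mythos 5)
Your argument pivots on the claim that analyticity of $P$ yields a continuous embedding $H_\infty \hookrightarrow H$ (equivalently $Q_\infty \le C\, ii^*$), which is what makes $h = i^{-1}Q_\infty x^*$ meaningful and lets you convert $H_\infty$-estimates into $H$-estimates. This claim is false in general, so it cannot be ``established from the identities above.'' Take $E = H = \ell^2$, $ie_n = \alpha_n e_n$ with $\alpha_n^2 = 2^{-n}n^{-2}$, and $Ae_n = -2^{-n}e_n$. Then $Q_\infty e_n = \frac{\alpha_n^2}{2\cdot 2^{-n}}\,e_n = \frac{1}{2n^2}\,e_n$ is trace class, so $\mu_\infty$ exists; and $Q_\infty A^* = iBi^*$ with $B = -\frac12 I$, so $P$ is analytic by Proposition \ref{prop:analytic}(4) (indeed $P$ is symmetric). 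Yet $Q_\infty \le C\,ii^*$ would force $2^{n-1}\le C$ for all $n$, so $H_\infty$ does not embed into $H$; worse, for $x^* = (1/n)_{n\ge1} \in \ell^2 = \Dom(A^*)$ one has $Q_\infty x^* = (\frac{1}{2n^3})_{n\ge 1} \notin \Ran(i)$, since $\sum_n \alpha_n^{-2}(2n^3)^{-2} = \sum_n 2^n/(4n^4) = \infty$. Thus the vector $i^{-1}Q_\infty x^*$ on which your computation on $Q_\infty(\Dom(A^*))$ is built need not exist, even though the conclusion of the theorem of course holds in this example ($S_H(t)e_n = e^{-2^{-n}t}e_n$ is a bounded analytic semigroup).

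Even where the embedding does hold, the proposal stops exactly where the work begins. From your identity $R(\lambda,A)iB^*i^*x^* = i_\infty A_\infty R(\lambda,A_\infty)i_\infty^* x^*$, the sectorial bound for $A_\infty$ gives $\|A_\infty R(\lambda,A_\infty)i_\infty^*x^*\|_{H_\infty} \le (M+1)\|i_\infty^* x^*\|_{H_\infty}$, which (i) carries no $|\lambda|^{-1}$ decay and (ii) is an estimate against $\|i_\infty^*x^*\|_{H_\infty}$ rather than against $\|B^*i^*x^*\|_H$; neither defect follows from boundedness of $B$ and $B+B^*=-I$, and your text simply asserts that this conversion can be done (``it is precisely here that \dots is indispensable'') without an argument. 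That missing step is the entire analytic content of the theorem. The paper's proof supplies it by a different mechanism that needs no containment of $H_\infty$ in $H$ at all: one introduces the densely defined, closable (by \cite{GGvN03}), in general \emph{unbounded} operator $V: i_\infty^* x^* \mapsto i^* x^*$ from $H_\infty$ to $H$, invokes \cite[Proposition 7.1]{AMN} to conclude that $-VV^*B$ is sectorial of angle $<\frac{\pi}{2}$, so that $G = VV^*B$ generates a bounded analytic $C_0$-semigroup $T$ on $H$, and then checks the intertwining on $\Dom(A^*)$: $Bi^*x^*\in\Dom(V^*)$ with $V^*Bi^*x^* = i_\infty^* A^* x^*$, hence $Gi^*x^* = i^*A^*x^*$ and $T(t)i^* = i^*S^*(t)$, giving $S_H = T^*$ by duality. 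To rescue your route you would have to prove a sectoriality statement of comparable strength for the restricted resolvents; as written, the proposal has a genuine gap.
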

\bpf Consider the linear mapping 
\begin{equation}\label{eq:defV} 
V: i\i\s x\s\mapsto i\s x\s,\quad x\s\in E\s.
\end{equation}
It is shown in \cite{GGvN03} that $i\i\s x\s = 0$
implies $i\s x\s =0$, so that this mapping is well-defined, and that
the closability of $D_H$ implies the closability of $V$ as a densely
defined operator from $\H$ to $H$. With slight abuse of notation we
denote its closure by $V$ again and let $\Dom(V)$ the domain of the
closure.

By \cite[Proposition 7.1]{AMN}, the operator $-VV^*B$ is sectorial of
angle $<\frac{\pi}{2}$, and therefore $G := VV^*B$ generates a
bounded analytic $C_0$-semigroup $({T}(t))_{t \geq 0}$ on ${H}.$ To
prove the theorem, by uniqueness of analytic continuation and duality
it suffices to show that ${T(t)}\circ{i^*} = i^*\circ{S^*(t)}$ for all
$t\ge 0$.

For all $x^* \in \Dom(A^*)$ we have $B{i}^*x^* \in
\Dom(V^*)$ and $V^*B{i}^*x^* = i_\infty^* A^*x^*$. Indeed, for $y^*
\in E^*$ one has
 $$
 [B{i}^*x^*,Vi_\infty^*y^*]
 =\ip{i_\infty^*A^*x^*,i_\infty^*y^*},
 $$
which implies the claim. By applying the operator $V$ to this
identity we obtain ${i}^*x^* \in \Dom(G)$ and $G \,{i}^*x^* =
{i}^*A^*x^*$, from which it follows that
${T}(t)i^* x^* = {i}^*S^*(t)x^*$. This proves the theorem, with $S_H = T\s$.\epf

This result should be compared with \cite[Theorem 9.2]{GvN03}, where it is shown that if $S$ restricts to an analytic $C_0$-semigroup on $H$ which is contractive in some equivalent Hilbert space norm, then
$P$ is analytic on $L^p(E,\mu\i)$.

Under the assumption that $P$ is analytic on $L^p(E,\mu\i)$, the gradient estimates of the previous section can be improved as follows.
Recall that a collection of bounded operators $\mathscr{T}$ between Banach spaces $X$ and $Y$ is said to be {\em $R$-bounded} if there exists a constant $C$ such that for any finite subset $T_1, \ldots, T_n \subset \mathscr{T}$ and any $x_1, \ldots, x_n \in X$ we have
\begin{align*}
  \E \Big\| \sum_{j=1}^n r_j T_j x_j \Big\|^2 \leq C^2
  \E \Big\| \sum_{j=1}^n r_j  x_j \Big\|^2,
\end{align*}
where $(r_j)_{j\ge 1}$ is an independent collection of Rademacher random variables. The notion of $R$-boundedness plays an important role in recent advances in the theory of evolution equations (see \cite{DHP,KuW}).

\begin{thm}\label{thm:grad} If $P$ is analytic, then for all $1<p<\infty$
the set $$\{\sqrt{t}D_H P(t): \ t>0\}$$ is $R$-bounded in $\calL(L^p(E,\mu\i), L^p(E,\mu\i;H))$ and we have the square function estimate
$$ \Big\n  \Big(\int_0^t \n D_H P(t)f\n_H^2 \,dt \Big)^{1/2}\Big\n_{L^p(E,\mu\i)} \lesssim \n f\n_{L^p(E,\mu\i)}$$
with implied constant independent of $f\in L^p(E,\mu\i)$.
\end{thm}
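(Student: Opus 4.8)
The plan is to deduce both the $R$-boundedness of $\{\sqrt{t}\,D_H P(t):t>0\}$ and the square function estimate from the analyticity of $P$ together with the factorization $L = D_H^* B D_H$ of Proposition~\ref{prop:MN}. The key structural fact is that $D_H P(t)$ can be written as $D_H P(t) = (D_H L^{-1/2})(L^{1/2}P(t))$ on a suitable dense subspace, so that the problem decouples into (i) the boundedness of $D_H L^{-1/2}$ from $L^p(E,\mu_\infty)$ into $L^p(E,\mu_\infty;H)$, and (ii) standard analytic-semigroup estimates for the family $\{\sqrt{t}\,L^{1/2}P(t):t>0\}$. First I would recall that since $P$ is an analytic $C_0$-contraction semigroup on $L^p(E,\mu_\infty)$ (Proposition~\ref{prop:analytic}), its generator $L$ is sectorial and admits a bounded $H^\infty$-functional calculus on the relevant sector; consequently the fractional power $L^{1/2}$ is well-defined and the operators $\sqrt{t}\,L^{1/2}P(t)$ form an $R$-bounded family. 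This is the classical fact that for a sectorial operator generating a bounded analytic semigroup with bounded imaginary powers (or bounded $H^\infty$-calculus), the set $\{t^{\alpha}L^{\alpha}P(t):t>0\}$ is $R$-bounded for each $\alpha>0$; I would invoke the Kunstmann--Weis machinery cited as \cite{KuW}.

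The heart of the argument is step (i): the operator $D_H L^{-1/2}$ extends to a bounded operator from $L^p(E,\mu_\infty)$ to $L^p(E,\mu_\infty;H)$. From $L = D_H^* B D_H$ and $B + B^* = -I$ one expects a Meyer-type inequality $\|D_H u\|_{L^p(E,\mu_\infty;H)} \lesssim \|L^{1/2}u\|_{L^p(E,\mu_\infty)}$ for $u$ in a core, which is exactly the statement that $D_H L^{-1/2}$ is $L^p$-bounded. I would establish this by a Littlewood--Paley / square-function route: write $D_H L^{-1/2}u$ via the subordination formula as an integral of $D_H P(t)u$ against $t^{-1/2}\,dt$, control the integrand using the $H$-invariance provided by Theorem~\ref{thm:H-invar} (which is what makes $D_H P(t)$ a genuine bounded operator with the Corollary's $L^p$-bound $\sqrt{t}\,\|D_H P(t)\| \lesssim \kappa(t)$), and then use the boundedness of the $H^\infty$-calculus of $L$ to sum the dyadic pieces. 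Alternatively, and more cleanly, I would prove the square function estimate directly and then derive $R$-boundedness from it, since for analytic semigroups the square function bound $\|(\int_0^\infty \|\psi(tL)u\|^2\,\tfrac{dt}{t})^{1/2}\| \lesssim \|u\|$ for $\psi(z)=z^{1/2}e^{-z}$ is equivalent to the boundedness of the $H^\infty$-calculus.

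Concretely, for the square function estimate I would combine the factorization $D_H P(t) = (D_H L^{-1/2})\,(L^{1/2}P(t))$ with the fact that $L^{1/2}P(t) = \psi(tL)\,t^{-1/2}$ where $\psi(z) = z^{1/2}e^{-z}$ satisfies the decay and vanishing conditions needed for McIntosh's square function theorem. Since $L$ has a bounded $H^\infty$-calculus on $L^p(E,\mu_\infty)$, McIntosh's theorem gives
\[
\Big\|\Big(\int_0^\infty \|L^{1/2}P(t)f\|_{L^p}^2\,dt\Big)^{1/2}\Big\|_{L^p(E,\mu_\infty)} = \Big\|\Big(\int_0^\infty \|\psi(tL)f\|^2\,\tfrac{dt}{t}\Big)^{1/2}\Big\|_{L^p} \lesssim \|f\|_{L^p(E,\mu_\infty)},
\]
and composing on the left with the bounded operator $D_H L^{-1/2}$ from step (i) yields the claimed bound for $(\int_0^t \|D_H P(t)f\|_H^2\,dt)^{1/2}$ after restricting the integral. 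The main obstacle I anticipate is step (i)—the $L^p$-boundedness of $D_H L^{-1/2}$, i.e. the Meyer inequality in this non-symmetric setting where $B$ is merely a bounded operator with $B+B^* = -I$ rather than $B = -\tfrac12 I$. In the symmetric Ornstein--Uhlenbeck case this is the classical Meyer inequality, but here one must exploit the sectoriality of $-VV^*B$ from \cite[Proposition 7.1]{AMN} and the bounded $H^\infty$-calculus to push the argument through; once $D_H L^{-1/2}$ is bounded, the passage to $R$-boundedness is automatic because $D_H L^{-1/2}$ is a single fixed operator and $\{\sqrt{t}\,L^{1/2}P(t):t>0\}$ is $R$-bounded, and the product of a fixed bounded operator with an $R$-bounded family is $R$-bounded.
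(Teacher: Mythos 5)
Your reduction to step (i) --- the $L^p$-boundedness of the Riesz transform $D_H(-L)^{-1/2}$ --- is exactly where the argument breaks down, and the gap is not merely technical. The inequality $\n D_H f\n_{L^p(E,\mu_\infty;H)} \lesssim \n \sqrt{-L}f\n_{L^p(E,\mu_\infty)}$ is (one half of) statement (1) of Theorem \ref{thm:kato}, i.e.\ the Kato square root property for $L$. The whole point of that theorem is that this property is \emph{not} a consequence of the analyticity of $P$: it is equivalent to the additional hypothesis (3) that $-A_H$ admits a bounded $H^\infty$-calculus on $H$, which by Le Merdy's theorem \cite{LeM-sim} holds if and only if $S_H$ is similar to an analytic contraction semigroup --- and this can fail even when $P$ is analytic, by the examples of \cite{McIYag} cited in the text. (Note also that $L$ is not self-adjoint: $B$ only satisfies $B+B^*=-I$, so even on $L^2$ one cannot identify $\n\sqrt{-L}f\n^2$ with the form $-\lb Lf,f\rb = \tfrac12\n D_H f\n^2$; this is precisely McIntosh's obstruction in the abstract Kato problem, so neither of your two suggested routes to step (i) --- subordination plus the Corollary of Section \ref{sec:gradient}, or sectoriality of $-VV^*B$ plus the $H^\infty$-calculus of $L$ --- can close the gap, since they use no compatibility between $A$ and $H$ beyond what analyticity already gives.) In short, you derive an unconditional statement (Theorem \ref{thm:grad} holds whenever $P$ is analytic) from a hypothesis that is strictly stronger and sometimes false. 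The paper's logic runs in the opposite direction: Theorem \ref{thm:grad} is proved by verifying, via Proposition \ref{prop:MN} and Theorem \ref{thm:H-invar}, the standing hypotheses of \cite[Theorem 2.2]{MN-kato}, and it then serves as one of the \emph{unconditional} ingredients ($R$-bisectoriality of the Hodge--Dirac operator $\Pi$) from which the Riesz estimate you want in step (i) is obtained only at the very end, conditionally on (3).

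Two secondary points. First, your claim that analyticity plus contractivity of $P$ on $L^p(E,\mu_\infty)$ already yields a bounded $H^\infty$-calculus for $L$ is not correct as stated; one needs in addition that $P$ is positive (so that Fendler's dilation theorem and transference apply, with the angle pushed below $\pi/2$ via $R$-sectoriality). This is repairable, since the Mehler-type formula shows $P$ is positive. Second, composing the single bounded operator $D_H(-L)^{-1/2}$ with the square function for $L$ requires boundedness of its $L^2(dt)$-valued extension (Marcinkiewicz--Zygmund); this does hold here because the target space is Hilbert-space valued, but it should be justified. Neither point rescues the proposal: without step (i) the argument does not go through for any $p$, whereas for $p=2$ the theorem follows directly from $L = D_H^* B D_H$ and $B+B^*=-I$ via the energy identity $\frac{d}{dt}\n P(t)f\n_{L^2}^2 = -\n D_H P(t)f\n_{L^2(E,\mu_\infty;H)}^2$, with no Riesz transform needed.
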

\begin{proof}
By Proposition \ref{prop:MN} and Theorem \ref{thm:H-invar}, the theorem is a special case of
\cite[Theorem 2.2] {MN-kato}.
\end{proof}

The above result plays a crucial role in our recent paper \cite{MN-kato} in which  $L^p$-domain characterisations for the operator $L$ and its square root have been obtained. Before stating the result, let us informally sketch how Theorem \ref{thm:grad} enters the argument. 
\renewcommand{\uL}{\underline{L}}
In order to prove a domain characterisation for the operator $L,$ we first aim to obtain two-sided estimates for $\| \sqrt{-L} f \|_{L^p(E,\mu_\infty)}$ in terms of suitable Sobolev norms. For this purpose we consider a variant of an operator theoretic framework introduced in \cite{AKM} in the analysis of the famous Kato square root problem. 
The idea behind this framework is that the second order operator $L$ can be naturally studied through the first order Hodge-Dirac-type operator
\begin{align*}
  \qquad \Pi =  \bma  0 &  -\uD_H^* B  \\ \uD_H & 0  \ema
              \text{ on $L^p(E,\mu_\infty) \oplus L^p(E,\mu_\infty;H)$}.
\end{align*}
This operator is bisectorial and its square is the sectorial operator given by
\begin{align*}
- \Pi^2 =  \bma  \uD_V^* B \uD_V & 0   \\ 0  & \uD_V \uD_V^* B \ema
     =  \bma  L & 0  \\ 0 & \uL  \ema,
\end{align*}
where $\uL := \uD_V \uD_V^* B.$
The approach in \cite{MN-kato} consists of proving estimates for $\sqrt{-L}f$ along the lines of the following formal calculation: 
 \begin{align*}
    \|\uD_H f \|_p
   & = \| \Pi (f,0) \|_p
 \leq \| \Pi/\sqrt{\Pi^2} \|_{p} \,
            \| \sqrt{\Pi^2} (f,0) \|_p
     = \| \Pi/\sqrt{\Pi^2} \|_{p} \,
            \| \sqrt{L}f \|_p.
 \end{align*}
Oversimplifying things considerably, the proof consists of turning this calculation into rigourous mathematics. This can be done once we know that the operator $\Pi/\sqrt{\Pi^2}$ is bounded. Since the function $z \mapsto z /\sqrt{z^2}$ is a bounded analytic function on each bisector around the real axis, it suffices to show that $\Pi$ has a bounded $H^\infty$-functional calculus. This in turn will follow if we show that
\begin{enumerate}
\item the resolvent set $\{(it - \Pi)^{-1} \}_{t\in \R \setminus \{0\}}$ is $R$-bounded;
\item the operator $\Pi^2$ admits a bounded functional calculus.
\end{enumerate}
To prove (1), we observe that 
$$ (I-it \Pi)^{-1} = \bma (1-t^2 L )^{-1} & -it (I-t^2 L )^{-1}\uD_H^* {B}\\
  it \uD_H  (I-t^2 L )^{-1} & (I-t^2  \uL )^{-1}  \ema, \quad
t\in \R\setminus\{0\}.
$$
It suffices to prove $R$-boundedness for each of the entries separately. The diagonal entries can be dealt with using abstract results on $R$-boundedness for positive contraction semigroups on $L^p$-spaces. The $R$-boundedness for the off-diagonal entries can be derived using Theorem \ref{thm:grad}.

To prove (2) we use the fact that the semigroup generated by $\uL$ equals 
$P \ot S_H^*$ on the range of the gradient $D_H.$ Here $S_H$ denotes the restriction 
of the semigroup $S$ to $H$ (see Theorem \ref{thm:H-invar}). Therefore (2) follows, 
provided that the negative generator $-A_H$ of $S_H$ has a bounded $H^\infty$-calculus. 
This reduces the original question about $\sqrt{-L}$ to a question about the operator 
$A_H,$ which is defined 
directly in terms of the data $H$ and $A$ of the problem. The latter question 
should be thought of as expressing the compatibility of the drift 
(represented by the operator $A$) and the noise (represented by the Hilbert space $H$).
This compatibility condition is not automatically satisfied. 
In fact, by a result of Le Merdy \cite{LeM-sim}, $-A_H$ admits a bounded 
$H^\infty$-functional calculus on $H$
if and only if $S_H$ is an analytic $C_0$-{\em contraction} semigroup on
$H$ with respect to some equivalent Hilbert space norm. Such needs not always
be the case, as is shown by well-known examples
\cite{McIYag}.
\renewcommand{\HH}{\mathscr{H}}
\renewcommand{\AA}{\mathscr{A}}
\renewcommand{\SS}{\mathscr{S}}
\newcommand{\LL}{\mathscr{L}}
\newcommand{\PP}{\mathscr{P}}

The following result summarises the informal discussion above and provides an additional equivalent condition in terms of the operator $A_\infty$. In this result we let $\D(D_H^2)$ denote the second order Sobolev space associated with the operator $D_H.$

\begin{thm}\label{thm:kato} Let $1<p<\infty$.
If $P$ is analytic on $L^p(E,\mu\i)$, then the following assertions are equivalent:
\ben
\item[\rm(1)] $\D(\sqrt{-L}) = \D(D_H)$ with norm equivalence
$$\n \sqrt{-L}f\n_{L^p(E,\mu\i)} \eqsim \n D_H
f\n_{L^p(E,\mu\i;H)};$$
\item[\rm(2)] $\Dom(\sqrt{-A_\infty}) = \Dom(V)$ with norm equivalence
  $$\n \sqrt{-A\i}h\n_{\H} \eqsim \n Vh \n_H;$$
\item[\rm(3)] $-A_H$ admits a bounded $H^\infty$-functional calculus on $H$.
\een
If these equivalent conditions are satisfied we have
$$ \D(L) = \D(D_H^2)\cap \D(A_\infty\s D),$$
where $D$ is the Malliavin derivative in the direction of $H_\infty$.
\end{thm}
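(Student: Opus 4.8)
The plan is to route all three conditions through the single statement that an associated sectorial (or bisectorial) operator has a bounded $H^\infty$-functional calculus. Concretely, I would show that (1) is equivalent to the bounded $H^\infty$-calculus of the Hodge-Dirac operator $\Pi$, that (2) is equivalent to the bounded $H^\infty$-calculus of $-A_\infty$, and that both of these reduce to (3), the bounded $H^\infty$-calculus of $-A_H$. The final domain identity would then be read off from the first-order characterisation $\Dom(\sqrt{-L}) = \Dom(D_H)$ furnished by (1), together with the explicit expression $L = D_H\s B D_H$ of Proposition \ref{prop:MN}.

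For (1)$\Leftrightarrow$(3) I would make the formal computation of the informal discussion rigorous. Since $z\mapsto z/\sqrt{z^2}$ is bounded and holomorphic on any bisector, a bounded $H^\infty$-calculus for $\Pi$ yields both $\n D_H f\n_p \lesssim \n\sqrt{-L}f\n_p$ and its reverse, i.e.\ (1); conversely, within the Axelsson--Keith--McIntosh framework of \cite{AKM} the estimate (1) together with the resolvent bounds is exactly what is equivalent to $\Pi$ having a bounded $H^\infty$-calculus. I would then verify the two hypotheses recalled in the discussion. The $R$-boundedness of $\{(it-\Pi)^{-1}\}$ holds \emph{unconditionally}: the off-diagonal resolvent entries are controlled by Theorem \ref{thm:grad}, and the diagonal entries by the general $R$-boundedness theory for positive contraction semigroups on $L^p$ applied to $P$ (analytic by hypothesis). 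The bounded $H^\infty$-calculus of $\Pi^2 = \mathrm{diag}(-L,-\uL)$ amounts to that of $L$ and of $\uL$ separately; $L$ generates the analytic contraction semigroup $P$ and is standard, while $\uL = D_H D_H\s B$ generates $P\ot S_H^*$ on $\Ran(D_H)$, so that its calculus is governed precisely by that of $-A_H$. This is the step where (3) enters and produces (1)$\Leftrightarrow$(3).

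For (2)$\Leftrightarrow$(3) I would first extract from the proof of Theorem \ref{thm:H-invar} the identities $A_\infty\s i\i\s x\s = i\i\s A\s x\s$ and $V\s B i\s x\s = i\i\s A\s x\s$, which with $V i\i\s x\s = i\s x\s$ give, on the natural core, $A_\infty = V\s B\s V$ and hence $A_H = B\s VV\s$. Since $B + B\s = -I$ this yields $-(A_\infty + A_\infty\s) = V\s V$, so $\Dom(V)$ is the form domain of the m-sectorial operator $-A_\infty$ and $\n Vh\n_H^2 = -2\Re[A_\infty h,h]_{\H}$. Thus (2) is the Kato square root property for $-A_\infty$, which on a Hilbert space is equivalent, by McIntosh's theory and the symmetry of the form in $A_\infty$ and $A_\infty\s$, to $-A_\infty$ having a bounded $H^\infty$-calculus. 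Finally $-A_\infty = V\s(-B\s)V$ and $-A_H = (-B\s)VV\s$ are of the form $V\s M V$ and $M VV\s$ with the same bounded $M$, so the standard transference of the holomorphic calculus between such operators shows one has a bounded $H^\infty$-calculus iff the other does, giving (2)$\Leftrightarrow$(3); alternatively one may pass through Le Merdy's similarity criterion \cite{LeM-sim}.

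Under the equivalent conditions I would finally identify $\Dom(L)$. Starting from (1) and $L = D_H\s B D_H$, a function $f$ lies in $\Dom(L)$ precisely when $D_H f$ lies in the domain of the first-order part $D_H\s B$; splitting $L$ into its second-order part, accounting for $\D(D_H^2)$, and its drift part $\lb x, A\s Df(x)\rb$, accounting for $\Dom(A_\infty\s D)$, then yields $\D(L) = \D(D_H^2)\cap \D(A_\infty\s D)$, where the Meyer-type inequalities equivalent to (1) are what allow the two contributions to be estimated independently and recombined. I expect the genuine obstacle to lie in the two functional-calculus reductions above: transferring the $H^\infty$-calculus of $\uL$ to $-A_H$ through the \emph{non-closed} range of $D_H$ and the ampliation $P\ot S_H^*$, and transferring between $V\s M V$ and $M VV\s$ for the \emph{unbounded} $V$, since these are exactly the points where domains must be tracked with care; verifying the AKM hypotheses in the non-Hilbertian $L^p$-setting is technical but, given Theorem \ref{thm:grad}, essentially mechanical.
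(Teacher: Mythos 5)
Your reduction of (1) to (3) via the Hodge--Dirac operator $\Pi$ follows the route the paper sketches (and that \cite{MN-kato} carries out), but your treatment of condition (2) contains a fatal error that would trivialise the theorem. By Proposition \ref{prop:analytic}, analyticity of $P$ already means that $S_\infty$ is an analytic $C_0$-\emph{contraction} semigroup on $\H$; hence, by the very theorem of Le Merdy \cite{LeM-sim} quoted in Section \ref{sec:gradient-anal}, $-A_\infty$ \emph{automatically} admits a bounded $H^\infty$-calculus (equivalently: $-A_\infty$ is m-sectorial on a Hilbert space, and such operators always have a bounded calculus, by von Neumann's inequality or Crouzeix--Delyon). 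Consequently your claim that (2) is equivalent to the bounded $H^\infty$-calculus of $-A_\infty$ is false: the latter always holds under the hypothesis of the theorem, whereas (2) is the Kato square root property for $-A_\infty$, which is precisely the kind of statement that can fail for m-sectorial operators --- this is McIntosh's counterexample, and indeed the whole point of the Kato problem is that the bounded calculus of an m-sectorial operator gives no control of its form domain, i.e.\ does not yield $\Dom(\sqrt{-A_\infty}) = \Dom(\sqrt{-A_\infty\s})$. The same objection kills your second step: a ``standard transference'' of the bounded $H^\infty$-calculus between $V\s M V$ and $M VV\s$ with $V$ unbounded does not exist; if it did, it would combine with the automatic calculus of $-A_\infty$ to make (3) automatic as well, contradicting the McIntosh--Yagi examples \cite{McIYag} which the paper cites precisely to stress that (3) is a genuine restriction. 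The intertwining identities between $f(ST)$ and $f(TS)$ are valid, but boundedness of the calculus does not pass across them when both factors are unbounded; the implication between the calculus of $VV\s B$ and the square root property of $V\s B V$ is the actual non-trivial content of \cite[Theorem 2.1]{MN-kato}, proved there by running the AKM first-order method on the Hilbert-space Dirac operator built from $V$ and $B$, not by a transference principle.

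A further, smaller, gap: in your (1)$\Leftrightarrow$(3), the converse direction is unsubstantiated, since within the AKM framework ``(1) plus $R$-bounded resolvents'' is not equivalent to the bounded calculus of $\Pi$; that calculus also requires the bounded calculus of $\Pi^2$, hence of $\uL$, which is exactly where (3) sits, so (1)$\Rightarrow$(3) needs a separate argument (in \cite{MN-kato} it is obtained by restricting (1) to the first Wiener chaos, where $L$ acts as $A_\infty\s$ and $D_H$ acts as $V$, giving (2) first). For the record, the paper itself does none of this work: its proof of Theorem \ref{thm:kato} consists of verifying the hypotheses of \cite[Theorems 2.1, 2.2]{MN-kato} --- the representation $L = D_H\s B D_H$ on a core (Proposition \ref{prop:MN}) and the $S$-invariance of $H$ (Theorem \ref{thm:H-invar}) --- and then citing those theorems, together with the observation that they are stated for $A_\infty\s$ rather than $A_\infty$, which is harmless by the known equivalence of (2) for $A_\infty$ and $A_\infty\s$ (\cite[Lemma 10.2]{MN-kato}); your identity $A_\infty \supseteq V\s B\s V$ touches this last point but you never address the discrepancy.
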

\begin{proof}
By Proposition \ref{prop:MN}
and Theorem \ref{thm:H-invar}, the theorem is a special case of
\cite[Theorems 2.1, 2.2]{MN-kato} provided we replace $A_\infty$ by $A_\infty\s$
in (2). The equivalence of (2) for $A_\infty$ and $A_\infty\s$, however, is well
known (see also \cite[Lemma 10.2]{MN-kato}).
\end{proof}

The problem of identifying the domains of $\sqrt{-L}$ and $L$ has a long and
interesting history. We finish this paper by presenting three known
special cases of Theorem \ref{thm:kato}. In each case, it is easy to
verify that (3) is satisfied.

\begin{ex}
For the classical Ornstein-Uhlenbeck operator, which corresponds to
$H=E=\R^d$ and $A = -I$, conditions (2) and (3) of Theorem
\ref{thm:kato} are  trivially fulfilled and (1) reduces to the
classical Meyer inequalities of Malliavin calculus. For a discussion
of Meyer's inequalities we refer to the book of Nualart  \cite{Nua}.
\end{ex}

\begin{ex}\label{ex:symm}
Meyer's inequalities were extended to infinite dimensions by
Shige\-kawa \cite{Sh92}, and Chojnowska-Michalik and Goldys
\cite{CG01,CG02}, who considered the case where $E$ is a Hilbert
space and $A_H$ is self-adjoint. Both authors deduce the generalised
Meyer inequalities from square functions estimates. The identification of
$\D(L)$ in the self-adjoint case is due to Chojnowska-Michalik and Goldys
\cite{CG01,CG02}, who extended the case $p=2$ obtained earlier by Da Prato
\cite{DP97}.
\end{ex}

So far, these examples were concerned with the selfadjoint case.

\begin{ex}\label{ex:FD}
A non-selfadjoint extension of Meyer's inequalities has been given
for the case $E=\R^d$ by Metafune, Pr\"uss, Rhandi, and
Schnaubelt \cite{MPRS02} under the non-degeneracy assumption $H=\R^d$. In this
situation the semigroup $P$
is analytic on $L^p(\mu_\infty)$ \cite{Fu95}, see also \cite{Go99,
GvN03}; no symmetry assumptions need to be imposed on $A$. The $S$-invariance of $H$ and the fact that the generator of
$S = S_H$ admits a bounded $H^\infty$-calculus are trivial. Therefore,
(3) is satisfied again. Note that the domain characterisation reduces to $\D(L)
= \D(D^2)$, where $D$ is the derivative on $\R^d$.
The techniques used in \cite{MPRS02} to prove
(1) are very different, involving diagonalisation arguments and the
non-commuting Dore-Venni theorem. The identification of
$\Dom_p(L) = \D(D^2)$ for $p=2$ had been obtained previously by Lunardi
\cite{Lu97}.
\end{ex}

Our final corollary extends the characterisations of $\D(L)$
contained in Examples \ref{ex:symm} and \ref{ex:FD} and lifts the non-degeneracy
assumption on $H$ in Example \ref{ex:FD}.
\begin{cor}
If $S$ restricts to an analytic
$C_0$-semigroup on $H$ which is contractive with respect to some
equivalent Hilbert space norm, then
for all $1<p<\infty$ we have
$$ \D(L) = \D(D_H^2)\cap \D(A_\infty\s D),$$
where $D$ is the Malliavin derivative in the direction of $H_\infty$.
\end{cor}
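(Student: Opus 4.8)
The plan is to deduce the corollary directly from Theorem~\ref{thm:kato}, whose standing assumption is the analyticity of $P$ and which requires one of the three equivalent conditions (1)--(3) to hold. Both ingredients are supplied by the hypothesis, so the work is entirely a matter of matching the assumption of the corollary to the cited results rather than carrying out any new estimate.

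First I would check that $P$ is analytic on $L^p(E,\mu\i)$. By \cite[Theorem~9.2]{GvN03}, recalled in the remark following Theorem~\ref{thm:H-invar}, the fact that $S$ restricts to an analytic $C_0$-semigroup on $H$ which is contractive in some equivalent Hilbert space norm already forces $P$ to be analytic. Since this is exactly the hypothesis of the corollary, analyticity of $P$ holds; moreover $S_H$ and its generator $A_H$, which enter the statement of Theorem~\ref{thm:kato}, are directly furnished by the hypothesis (consistently, Theorem~\ref{thm:H-invar} recovers the $S$-invariance of $H$ from the analyticity of $P$).

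Next I would verify condition (3) of Theorem~\ref{thm:kato}, namely that $-A_H$ admits a bounded $H^\infty$-functional calculus on $H$. By the theorem of Le~Merdy \cite{LeM-sim}, quoted in the discussion preceding Theorem~\ref{thm:kato}, this is equivalent to $S_H$ being an analytic $C_0$-contraction semigroup on $H$ with respect to some equivalent Hilbert space norm, which is once again precisely what we have assumed. Hence (3) holds.

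With $P$ analytic and condition (3) in force, Theorem~\ref{thm:kato} applies and yields the identity $\D(L)=\D(D_H^2)\cap\D(A_\infty\s D)$, which is the assertion of the corollary. I expect no genuine obstacle in this argument: the only point deserving care is the observation that the two cited results, \cite{GvN03} for the analyticity of $P$ and \cite{LeM-sim} for the bounded $H^\infty$-calculus, both rest on the same similarity-to-a-contraction hypothesis, so that the single assumption of the corollary simultaneously activates the framework of Theorem~\ref{thm:kato} and its condition (3)---a condition which, as stressed in the discussion, need not hold in general.
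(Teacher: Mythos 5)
Your proposal is correct and follows essentially the same route as the paper's own proof: analyticity of $P$ via \cite[Theorem~9.2]{GvN03} (as recalled after Theorem~\ref{thm:H-invar}), condition (3) of Theorem~\ref{thm:kato} via Le~Merdy's characterisation \cite{LeM-sim}, and then an application of Theorem~\ref{thm:kato}. Nothing is missing.
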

\begin{proof}
As has already been mentioned in the discussion preceding Theorem \ref{thm:grad},
the assumptions imply that $P$ is analytic.
Moreover, since the restricted semigroup
$S_H$ is similar to an analytic contraction semigroup,
its negative generator $-A_H$
admits a bounded $H^\infty$-calculus, and the result follows from
Theorem \ref{thm:kato}.
\end{proof}

Let us finally mention that the results in \cite{MN-kato} have been proved for a more general class of elliptic operators on Wiener spaces (cf. Section 3 of that paper). In this setting the data consist of
 \begin{itemize}
\item an arbitrary Gaussian measure $\mu$ on a separable Banach space $E$ with reproducing kernel Hilbert space $\mathscr{H}$;
\item an analytic $C_0$-contraction semigroup $\SS$ on $\HH$ with generator $\mathscr{A}.$
\end{itemize}
Given these data, the semigroup $\PP$ is defined on $L^2(E,\mu)$ by second quantisation of the semigroup $\SS$. Roughly speaking, this means that one uses the Wiener-\Ito\ isometry to identify $L^2(E,\mu)$ with the symmetric Fock space over $\HH$, i.e., the direct sum of symmetric tensor powers of $\HH.$ The semigroup $\PP$ is 
then defined by applying $\SS$ to each factor
\begin{align*}
 \PP(t)\sum_{\sigma\in S_n}(h_{\sigma(1)} \ot \ldots \ot h_{\sigma(n)}) := \sum_{\sigma\in S_n}\SS(t) h_{\sigma(1)} \ot \ldots \ot  \SS(t) h_{\sigma(n)},
\end{align*}
where $S_n$ is the permutation group on $\{1,\dots,n\}$.
For the details of this construction we refer to \cite{Janson}.
Equivalently, the semigroup $\PP$ can be defined via the the following generalisation of the classical Mehler formula,
\begin{align*}
 \PP(t) f(x) = \int_E f( \SS(t) x + \sqrt{I - \SS^*(t)\SS(t)} y ) \, d\mu(y),
\end{align*}
which makes sense by virtue of the fact that any bounded linear operator on $\HH$ admits a unique measurable linear extension to $E$ \cite{Bo}.
The generator $\LL$ of the semigroup $\PP$ is the elliptic operator formally given by 
\begin{align*}
 \LL = D^* \AA D,
\end{align*}
where $D$ denotes the Malliavin derivative associated with $\mu$ and its adjoint $D^*$ is the associated divergence operator.
The application to Ornstein-Uhlenbeck operators described in this paper is obtained by taking
$\mu \sim \mu_\infty$ and $\AA \sim A_\infty^*$ (cf. \cite{CG96, Ne98}).

\section{An example}

In this section we present an example of a Hilbert space $E$, a continuously
embedded Hilbert subspace
$H\embed E$, and a $C_0$-semigroup generator $A$ on $E$
such that:
\bit
\item the semigroup $S$ generated by $A$ fails to be analytic;
\item the stochastic Cauchy problem $$dU(t) = AU(t)\,dt + dW_H(t)$$ 
admits a unique invariant measure, which we denote by $\mu_\infty$;
\item the associated Ornstein-Uhlenbeck semigroup $P$ is analytic on $L^2(E,\mu_\infty).$
\eit
Thus, although analyticity of $P$ implies analyticity
of $S_H$ (Theorem \ref{thm:H-invar}), it does not imply analyticity of $S$.

Let $E = L^2(\R_+, e^{-x}\,dx)$ be the space of all measurable functions $f$
on $\R_+$ such that
$$ \n f\n := \Big(\int^{\infty}_0 |f(x)|^2\,e^{-x}dx \Big)^\frac12
< \infty.$$
The rescaled left translation semigroup $S$,
$$ S(t)f(x) := e^{-t} f(x+t), \quad f\in E, \ t>0, \ x >0,$$
is strongly continuous and contractive on $E$, and satisfies
$\n S(t)\n  = e^{-t/2}.$
Let
$H = H^2(\C_+)$ be the Hardy space of analytic functions $g$
on the open right-half plane $\C_+ = \{z\in \C: \ \Re z>0\}$
such that
$$ \n g\n_H := \sup_{x>0} \Big(\int_{-\infty}^\infty |g(x+iy)|^2\,dy\Big)^\frac12
< \infty.$$ Since $\lim_{x\to +\infty} g(x) = 0$ for all $g\in
H$, the restriction mapping $i: g\mapsto g|_{\R_+}$ is
well-defined as a bounded operator from $H$ to $E$. By
uniqueness of analytic continuation, this mapping is
injective. Since $i$ factors through $L^\infty(\R_+,
e^{-x}\,dx)$, $i$ is Hilbert-Schmidt \cite[Corollary 5.21]{ISEM}.
As a consequence (see,
e.g., \cite[Chapter 11]{DaPZab}), the Cauchy problem $ dU(t) =
AU(t)\,dt + \,dW_H(t)$ admits a unique invariant measure
$\mu_\infty$.

The rescaled left translation semigroup $S_H$,
$$ S_H(t)g(z) := e^{-t} g(z+t), \quad f\in H, \ t\ge 0, \ \Re z > 0,$$
is strongly continuous on $H$, it extends to an analytic
contraction semigroup of angle
$\frac12\pi$, and satisfies $\n S_H(t)\n_H = e^{-t/2}$.
Clearly, for all $t\ge 0$ we have
$ S(t)\circ i = i\circ S_H(t).$ By these observations combined with
\cite[Theorem 9.2]{GvN03}, the associated
Ornstein-Uhlenbeck semigroup $P$ is analytic.

\section{Application to the stochastic heat equation}

In this final section we shall apply our results to the following stochastic PDE 
with additive space-time white noise: 
\begin{equation}\label{eq:SH}
\begin{aligned} \frac{\partial u}{\partial t}(t,y) & = \frac{\partial^2 u}{\partial y^2}(t,y) + 
\frac{\partial^2 W}{\partial t\,  \partial y}(t,y), && t\ge 0, \ y\in [0,1], \\
u(t,0) & = u(t,1)  = 0, && t\ge 0,\\
u(0,y) & = 0, && y\in [0,1].
\end{aligned}
\end{equation}
This equation can be cast into the abstract form \eqref{SCP} by taking $H=E=L^2(0,1)$
and $A$ the Dirichlet Laplacian $\Delta$ on $E$.
The resulting equation 
$$
\begin{aligned} dU(t) & = A U(t)\,dt + dW(t), \\
U(0) & = 0,\end{aligned}
$$
where now $W$ denotes an $H$-cylindrical Brownian motion,
has a unique solution $U$ given by
$$ U(t) = \int_0^t S(t-s)\,dW(s), \quad t\ge 0,$$
where $S$ denotes the heat semigroup on $E$ generated by $A$.
Let $\mu_\infty$ denote the unique invariant measure on $E$ associated with $U$,
and let $H_\infty$ denote its reproducing kernel Hilbert space. Let
$i_\infty: H_\infty\embed E$ denote the canonical embedding and 
let $i:H\to E$ be the identity mapping. 
By \cite[Theorem 3.5, Corollary 5.6]{GGvN03} the densely defined operator
$V: i_\infty\s x\s \mapsto i\s x\s$ defined in \eqref{eq:defV}
is closable from $H_\infty$ to $H$.

Let $L$ be the generator of the Ornstein-Uhlenbeck semigroup $P$ on $L^p(E,\mu_\infty)$ 
associated with $U$. Since $P$ is analytic, the results of 
Sections \ref{sec:gradient} and 
\ref{sec:gradient-anal} can be applied. 
Noting that $\Delta$ is selfadjoint on $H,$
condition (3) of Theorem \ref{thm:kato} is satisfied and therefore
\begin{equation*} \Dom_p(\sqrt{-L}) = \Dom_p(D) \quad (1<p<\infty)
\end{equation*}
where $D = D_H = D_E$ denotes the Fr\'echet derivative on $L^p(E,\mu_\infty)$. 
 
One can go a step further by noting that the problem \eqref{eq:SH}
is well-posed even on the space $$\wt E := C_0[0,1] = \{f\in C[0,1]: \ f(0)=f(1) = 0\},$$
in the sense that the random variables $U(t)$ are $\wt E$-valued almost surely and that $U$
admits has a modification $\wt U$ with continuous (in fact, even H\"older continuous) trajectories in $\wt E$. 
Moreover, the invariant measure $\mu_\infty$ is supported on $\wt E$.
In analogy to \eqref{eq:P} this allows us to define an ``Ornstein-Uhlenbeck semigroup'' 
$\wt P$ on $L^p(\wt E,\mu_\infty)$ associated with $\wt U$ by
$$ \wt P(t)f(x):= \E f(\wt U^x(t)), \quad t\ge 0, \ x\in \wt E,$$
where $\wt U^x(t) = \wt S(t)x + \wt U(t)$ and $\wt S$ is the heat semigroup on $\wt E$. 
It is important to observe that we are not in the framework considered in the previous sections,
due to the fact that $H = L^2(0,1)$ is not continuously embedded in $\tilde E$.
Let $\wt L$ denote the generator of $\wt P$.
Under the natural identification 
$$L^p(\wt E,\mu_\infty) = L^p(E,\mu_\infty)$$ (using that the underlying measure spaces 
are identical up to a set of measure zero), we have $\wt P(t) = P(t)$ and $\wt L = L$, 
so that 
\begin{equation}\label{eq:domtL1}  
\Dom_p(\sqrt{-\wt L}) = \Dom_p(\sqrt{-L}) = \Dom_p(D)\quad (1<p<\infty).
\end{equation}

This representation may seem somewhat unsatisfactory, as the right-hand side refers explicitly to the ambient space $E$ in which $\wt E$ is embedded. 
An intrinsic representation of 
$  \Dom_p(\sqrt{-\wt L})$ can be obtained as follows.
For functions $F:\wt E\to \R$ of the form
$$
 F(f) = \phi\Big(\int_0^1 fg_1\,dt, \, \dots\, , \int_0^1 fg_N\,dt\Big), \quad f\in \wt E,
$$
with $\phi\in C_{\rm b}^2(\R^N)$ and $g_1,\dots,g_N\in H$, we define 
$\wt D F:\wt E\to H$ by
$$ \wt D F(f) = \sum_{n=1}^N \frac{\partial \phi}{\partial y_n}\Big(\int_0^1 fg_1\,dt, \, \dots\, , \int_0^1 fg_N\,dt\Big)g_n, 
\quad f\in\wt E.$$
This operator is closable in $L^p(\wt E,\mu_\infty)$ for all $1\le p<\infty$.
On $L^2(\wt E,\mu_\infty)$ we have the representation
$$\wt L = \wt D\s \wt D.$$ 
As a result we can apply
\cite[Theorem 2.1]{MN-kato} directly to the operator $V$ 
and obtain that
\begin{equation}\label{eq:domtL2}
  \Dom_p(\sqrt{-\wt L}) = \Dom_p(\wt D) \quad (1<p<\infty).
\end{equation}
This answers a question raised by Zdzis{\l}aw Brze\'zniak (personal communication).
To make the link between the formulas \eqref{eq:domtL1} and \eqref{eq:domtL2} note that, under the identification $L^p(\wt E,\mu_\infty) = L^p(E,\mu_\infty)$,
one also has $\Dom_p(\wt D) = \Dom_p(D)$.
 
\begin{rem}\label{rem:explicit}
It is possible to give explicit representations for the space $H_\infty$ 
and the operator $V$.
To begin with, the covariance operator $Q_\infty$ of $\mu_\infty$ 
is given by
$$ Q_\infty f = \int_0^\infty S(t)S\s(t)f\,dt = \int_0^\infty S(2t)f \,dt = \tfrac12\Delta^{-1} f, \quad f\in E.$$
It follows that the reproducing kernel Hilbert space $H_\infty$ associated with $\mu_\infty$ equals
\begin{align*}
H_\infty =  \Ran(\sqrt{Q_\infty}) =\Dom(\sqrt{-\Delta}) = H_0^1(0,1). 
\end{align*}
Noting that $Q_\infty = i_\infty\circ i_\infty\s$, we see that the operator
$V: i_\infty\s x\s \mapsto i\s x\s$ is given by 
\begin{align*} \Dom(V) &= H^2(0,1)\cap H_0^1(0,1), \\
 Vf &= 2\Delta f, \quad f\in \Dom(V).    
   \end{align*}
\end{rem}

\begin{rem}
Formulas for $\Dom_p(\wt L)$ analogous to \eqref{eq:domtL1} and \eqref{eq:domtL2} 
can be deduced from Theorem \ref{thm:kato} and \cite[Theorem 2.2]{MN-kato} 
in a similar way.
\end{rem}

The Ornstein-Uhlenbeck operators $L$ and $\wt L$ considered above are symmetric on 
$L^2(E,\mu_\infty)$, 
and therefore the domain identifications for their square roots could essentially 
be obtained from the results of \cite{CG01, Sh92}. The above argument, however, 
can be applied to a large class of second order elliptic differential operators 
$A$ on $L^2(0,1)$ (but explicit representations as in Remark \ref{rem:explicit} 
are only possible when $A$ is selfadjoint). 

In fact, under mild assumptions on the coefficients and under various types 
of boundary conditions, such operators $A$ have a bounded
$H^\infty$-calculus on $H=E=L^2(0,1)$ (see \cite{DDHPV, DuoMcI, KaKuWe} and there 
references therein). By the result of Le Merdy \cite{LeM-sim} mentioned earlier, 
this implies that the analytic semigroup $S$ generated 
by $A$ is contractive in some equivalent Hilbertian norm. Hence, by \cite[Theorem 9.2]{GvN03},
the associated Ornstein-Uhlenbeck semigroup is analytic.
Typically, under Dirichlet boundary conditions, $S$ is uniformly exponentially stable. 
This implies (see \cite{DaPZab}) that the solution $U$ of 
\eqref{SCP} admits a unique invariant measure. Finally, the analyticity of $S$
typically implies space-time H\"older regularity of $U$ (see \cite{Brz97, DNW}), so that the corresponding 
stochastic PDE is 
well-posed in $\wt E = C_0[0,1]$. We plan to provide more details in a 
forthcoming publication.

\bibliographystyle{ams-pln}
\bibliography{Maas_vanNeerven}

\end{document}